\newcommand{\n}{\noindent}
\newcommand{\vp}{\varepsilon}
\newcommand{\bb}[1]{\mathbb{#1}}
\newcommand{\cl}[1]{\mathcal{#1}}
\newcommand{\ovl}{\overline}
\theoremstyle{plain}
\newtheorem{thm}{Theorem} 
\newtheorem{lem}[thm]{Lemma}
\newtheorem{cor}[thm]{Corollary}
\theoremstyle{definition}
\newtheorem{dfn}[thm]{Definition}
\theoremstyle{remark}
\newtheorem{rem}[thm]{Remark}
\def\tilde{\widetilde}
\renewcommand{\tilde}{\widetilde}
\def\R{\bb R}
\def\Z{\bb Z}
\def\C{\bb C}
\def\E{\bb E}
\def\N{\bb N}
\def\P{\bb P}
\def\T{\bb T}
\def\nl{\nolimits}
\begin{document}
\def\d{\delta}

 \title{
  A note on Sidon sets in bounded orthonormal systems}

\author{by\\
 Gilles  Pisier\\
Texas A\&M University and UPMC-Paris VI}

%date{}
\maketitle
\begin{abstract}  
We give a simple example of an $n$-tuple
of orthonormal elements in $L_2$
(actually martingale differences)
bounded by a fixed constant, and hence subgaussian with
a fixed constant but that are Sidon
only with constant $\approx \sqrt n$. This is
optimal. The first example of this kind was given by Bourgain and Lewko,
but with constant $\approx \sqrt {\log n}$.
We also include the analogous $n\times n$-matrix valued example,
for which the optimal constant is $\approx  n$.
We deduce from our example  that there are two
$n$-tuples each Sidon with constant 1,
lying in orthogonal linear subspaces and such that
their union is Sidon only with constant $\approx \sqrt n$.
This is again asymptotically optimal.
We show that  any martingale difference sequence
with values in $[-1,1]$ is ``dominated" in a natural sense
(related to our results) by any 
 sequence of independent, identically distributed, symmetric $\{-1,1\}$-valued variables (e.g. the Rademacher
functions). We include a self-contained proof that any sequence $(\varphi_n)$ 
that is the union
of two Sidon sequences lying in orthogonal subspaces is
 such that   $(\varphi_n\otimes\varphi_n \otimes\varphi_n\otimes\varphi_n)$ is Sidon.
 \end{abstract}
 
 MSC: 43A46,42C05 (primary).  60642, 60646 (secondary).

 One of the most celebrated results
 in the theory of Sidon sets in the trigonometric system
 on the circle
 (or   on a compact Abelian group) is Drury's union theorem
 that says that the union of two (disjoint) Sidon sets is still a Sidon set.
 In a recent paper Bourgain and Lewko \cite{BoLe} 
 considered Sidon sets for a general
  uniformly bounded  orthonormal system $(\varphi_n )$
  in $L_2$ over an arbitrary probability space $(T,m)$. They
 extended some of the classical  results known
 for systems of  characters on compact Abelian groups.
 We continued on the same theme in \cite{Pi3}.
 Let us recall the basic definitions.
 We say that $(\varphi_n )$ is Sidon if there is a constant $C $
  such that for any finitely supported scalar sequence $n\mapsto x_n$
  \begin{equation}\label{1bis}\sum |x_n| \le C \|\sum x_n \varphi_n\|_\infty.\end{equation}
  The smallest such $C$ is called the Sidon constant of $(\varphi_n )$.
  The system $(\varphi_n )$ is called
   $\otimes^k$-Sidon if the system ($\varphi_n (t_1 )\varphi_n (t_2 )\cdots \varphi_n (t_k )$)
   is Sidon in $L_2(T^k,m\times\cdots\times m)$.
   We say that
   $(\varphi_n )$ is subgaussian
   if there is a constant $\beta$ such that
   for any finite   scalar sequence $(x_n)$ such that $\sum |x_n|^2\le 1$
   we have
   $$\int  e^{|\sum x_n \varphi_n|^2/\beta^2} dm \le e.
   $$
   When this holds we say that $(\varphi_n )$ is $\beta$-subgaussian.\\
   Bourgain and Lewko \cite{BoLe} proved that subgaussian
   does not imply Sidon but does  imply
   $\otimes^5$-Sidon, and the author \cite{Pi3} improved this to
   $\otimes^2$-Sidon.  \\
      Let $(g_n)$ be an i.i.d. sequence of standard Gaussian random variables.
    We say that $(\varphi_n )$
    is randomly Sidon if there is a constant
    $C$ such that for any finite scalar sequence $(x_n)$ we have
  $$\sum |x_n|\le C \E\|\sum g_n x_n \varphi_n\|_\infty.$$
 In \cite{Pi3}, we
 proved that randomly Sidon implies
 $\otimes^4$-Sidon. It follows as an immediate corollary that
 the union of two  mutually orthogonal  Sidon systems  
 is $\otimes^4$-Sidon (see Theorem   \ref{n1} for a   quick outline of a
 direct proof). This generalizes Drury's celebrated
 union theorem for sets of characters. Naturally,
 this last result raises the question whether
 $\otimes^4$-Sidon can be replaced by $\otimes^k$-Sidon
 for $k<4$. While we cannot decide this for
 $k=2$ or $k=3$, the goal of the present note is to
 settle the question at least  for $k=1$.

We first   improve Bourgain and Lewko's example from 
   \cite{BoLe}   showing that subgaussian does not imply Sidon for
 uniformly bounded  orthonormal systems.
 Our example is a (very simple) martingale difference sequence
 and the constant is asymptotically sharp.
 As a corollary we show that, not surprisingly,
 Drury's union theorem
 does not extend to 
 two mutually orthogonal uniformly bounded  orthonormal systems.
 %This complements 
%our recent result in \cite{Pi3}
%that the union of two such systems
%is $\otimes^4$-Sidon. 
 
 \def\tr{{\rm tr}}
 \begin{thm}\label{t1} Fix $\vp>0$. There is a uniformly bounded  
 real valued orthonormal system $(\varphi_n)$
 with
 $\|\varphi_n \|_\infty \le 1+\vp$ for all $n$
  that is  subgaussian 
  and actually  satisfies
\begin{equation}
\label{e2} \E e^{ \sum x_n \varphi_n} \le e^{(1+\vp)^2\sum x_n^2/2} 
\end{equation}
for any finite sequence of real numbers $(x_n)$,
but  $(\varphi_n)$ is not a Sidon system.\\
More precisely, 
 the smallest constant  $C_n$
such that for any scalar coefficients
$(x_k)$ 
we have
$$\sum\nl_1^n |x_k| \le C_n \|\sum\nl_1^n x_k \varphi_k\|_\infty$$
satisfies
\begin{equation}\label{e1}
\forall n\ge1\quad
C_n\ge \d_\vp\sqrt n,
\end{equation}
where  $\d_\vp>0$   depends only on $\vp$.
In addition, $(\varphi_n)$
is a  martingale difference sequence.
 \end{thm}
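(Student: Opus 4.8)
The plan is to let the subgaussian bound \eqref{e2} come essentially for free and to concentrate all the work on engineering the partial sums. Indeed, if $(\varphi_k)$ is a martingale difference sequence with $\|\varphi_k\|_\infty\le 1+\vp$, then applying Hoeffding's lemma conditionally on the past gives $\E[e^{x_k\varphi_k}\mid \mathcal{F}_{k-1}]\le e^{(1+\vp)^2x_k^2/2}$, and the tower property yields \eqref{e2} after multiplying the factors. Orthonormality of a martingale difference sequence is equivalent to $\E\varphi_k^2=1$ for every $k$, the off-diagonal orthogonality being automatic. Since Cauchy--Schwarz together with orthonormality already gives $\sum_1^n|x_k|\le\sqrt n\,\|\sum_1^n x_k\varphi_k\|_2\le \sqrt n\,\|\sum_1^n x_k\varphi_k\|_\infty$, one always has $C_n\le\sqrt n$, so \eqref{e1} is best possible and will be proved by exhibiting a single bad vector, namely $x_k\equiv 1$. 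Thus everything reduces to building a bounded orthonormal martingale difference sequence for which the martingale $M_k=\sum_{j\le k}\varphi_j$ has unit-variance increments yet stays of size $O(\sqrt n)$ in $L_\infty$: then $\sum_1^n|x_k|=n$ while $\|\sum_1^n\varphi_k\|_\infty=\|M_n\|_\infty\lesssim\sqrt n$, forcing $C_n\gtrsim\sqrt n$.

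For a fixed $n$ I would start from a simple $\pm1$ random walk $S_k$ on $\{-1,1\}^n$, stopped at the first time $\tau$ it reaches a symmetric barrier $\pm R$ with $R\approx A\sqrt n$, and set $M_k=S_{k\wedge\tau}$. This martingale is automatically bounded by $R$, and its increments are $\pm1$ on $\{\tau\ge k\}$ and $0$ otherwise, so they are bounded but have variance $\P(\tau\ge k)<1$. To restore orthonormality I would rescale each increment by the \emph{deterministic} constant $c_k=\P(\tau\ge k)^{-1/2}$; because $c_k$ is a constant, $\varphi_k:=c_k(M_k-M_{k-1})$ is still a martingale difference and now satisfies $\E\varphi_k^2=1$. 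The role of the barrier is that $\|\varphi_k\|_\infty=c_k\le 1+\vp$ holds as soon as $\P(\tau\ge n)\ge(1+\vp)^{-2}$, and by Donsker's theorem $\P(\max_{i<n}|S_i|<A\sqrt n)\to\P(\sup_{[0,1]}|B|<A)\uparrow 1$ as $A\to\infty$, so a single choice $A=A(\vp)$ makes this hold for all large $n$. Finally a summation by parts, using that $c_k$ is increasing and $|M_k|\le R$, gives $\|\sum_1^n\varphi_k\|_\infty=|c_nM_n-\sum_{k<n}(c_{k+1}-c_k)M_k|\le 2(1+\vp)R\le C'(\vp)\sqrt n$, which is exactly the desired sup bound.

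This produces, for each $n$, a system living on its own probability space; to obtain one infinite system I would place independent copies of the above construction, with geometrically growing block lengths $\ell_m$, on the product of the corresponding spaces, and concatenate them in blocks to form $(\varphi_k)_{k\ge1}$. Revealing the blocks one after another gives a filtration for which the whole sequence is a martingale difference sequence, while independence together with $\E\varphi_k=0$ makes functions from different blocks orthogonal; boundedness by $1+\vp$, hence \eqref{e2}, then holds globally. For the lower bound \eqref{e1} I would test the vector equal to $1$ on the largest block contained in $\{1,\dots,n\}$ and $0$ elsewhere: that function depends only on the coordinates of that block, so its sup norm is the block's sup norm $\le C'\sqrt{\ell_{m^*}}$, while its $\ell_1$ norm is $\ell_{m^*}$; geometric growth guarantees $\ell_{m^*}\ge c\,n$, yielding $C_n\ge\d_\vp\sqrt n$, the finitely many small $n$ being absorbed by shrinking $\d_\vp$ since always $C_n\ge(1+\vp)^{-1}$.

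The main obstacle is the tension hidden in the reduction: a martingale whose increments each carry full variance $1$ would, if left unconstrained, have partial sums of size up to $n$, whereas forcing the walk to stay within $O(\sqrt n)$ by absorption necessarily kills the increment variance near the barrier. The device that resolves this is precisely the barrier-plus-deterministic-rescaling trick: absorption keeps $M_k$ inside $[-R,R]$, the rescaling by $c_k$ repairs the variance without disturbing either the martingale property or orthogonality, and the slack $\vp>0$ is exactly what is needed to keep $c_k\le 1+\vp$. At $\vp=0$ one would be forced back to unrescaled signs and the example would collapse to the Sidon Rademacher system. Checking the uniform variance repair (via Donsker) and the summation-by-parts bound are the only genuinely quantitative points; the rest is soft.
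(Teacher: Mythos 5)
Your construction is correct and is, at its core, the paper's own idea: a $\pm 1$ walk killed at a barrier of height $\approx A\sqrt n$, whose increments are then renormalized to restore orthonormality; indeed your per-$n$ stopped-walk version $M_k=S_{k\wedge\tau}$ is literally the ``neater example'' attributed to Maurey in the remark following Theorem \ref{t1}, and the subgaussian bound via conditional Hoeffding is the paper's appeal to Azuma. The differences are in execution, and on each point the paper's route is slightly leaner. First, the paper builds one infinite sequence directly by letting the barrier grow, $A_k=\{|S_k|\le c_\vp\sqrt k\}$ and $f_k=\vp_k 1_{A_{k-1}}$, so no block-gluing is needed; your concatenation of independent blocks of geometrically growing length is sound (martingale property, cross-block orthogonality, and the loss of only a constant in \eqref{e1} all check out) but is an extra layer you could avoid. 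Second, the paper normalizes by $\varphi_k=f_k/\|f_k\|_2$ and tests the Sidon inequality with the coefficients $x_k=\|f_k\|_2$, so that $\sum x_k\varphi_k=S_n$ is exactly the bounded stopped walk and no Abel summation is required; your deterministic rescaling $c_k=\P(\tau\ge k)^{-1/2}$ together with the test vector $x_k\equiv 1$ forces the summation-by-parts step, which works (since $c_k$ is monotone and bounded by $1+\vp$) but costs an extra factor and an extra argument. Third, you should replace the appeal to Donsker by the non-asymptotic bound $\P(\max_{k\le n}|S_k|>A\sqrt n)\le 2\P(|S_n|>A\sqrt n)\le 4e^{-A^2/2}$ (L\'evy plus Hoeffding, as in the paper): Donsker only gives the variance lower bound for large $n$, whereas the $L_\infty$ bound $\|\varphi_k\|_\infty\le 1+\vp$ must hold for every $k$, including those in the small initial blocks; this is harmless in the end only because for $n<A^2$ the walk cannot reach the barrier at all, but the uniform exponential bound makes the point moot and should be stated.
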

  \begin{proof} 
Let $(\vp_n)$ be a sequence of independent
choices of signs, i.e. independent $\pm$-valued
random variables on a probability space
$(\Omega,\P)$ taking the values $\pm 1$ with probabilility $1/2$.
Let $\cl A_n$ be the 
$\sigma$-algebra generated by 
$\{\vp_k\mid 0\le k\le n\}$.
Let $0=a_0  \le\cdots \le a_{n-1}\le a_n\le\cdots$ be a fixed 
non-decreasing sequence for the moment.
 Consider  $A_0=\Omega$,
 $S_0=0$,  
 and define inductively $A_n\in \cl A_n$
 and $S_n$ as follows:
 %, $S_1=\vp_1$ and
 $$S_n=S_{n-1}+ \vp_n 1_{A_{n-1}} 
  \text{  and  }
   A_{n}=\{ |S_{n}|\le a_{n}  \}.$$
Assume that $\P(A_n)\ge \d$ for some fixed $\d>0$.
Then let
 \begin{equation}
 \label{e7} f_n= \vp_n 1_{A_{n-1}}.\end{equation}
This is a martingale difference sequence with $\|f_n\|_\infty\le 1$,
therefore an orthogonal system such that
$$S_n= f_1+\cdots+f_n$$
and moreover
$$\|f_n\|_2^2 \ge \d.$$
We claim that the Sidon constant of 
$\{f_1,\cdots,f_n\}$
is $\ge n/(1+a_{n-1})$.
This follows from the observation that
  \begin{equation}
 \label{e6}
\forall n\quad \|S_n\|_\infty \le 1+a_{n-1}.\end{equation}
Indeed, this is immediate 
by induction on $n$ (since either
 $\|S_n\|_\infty\le a_{n-1}+1$ or
$\|S_n\|_\infty \le \|S_{n-1}\|_\infty $
  depending whether
$\|S_n\|_\infty $ is attained on $A_{n-1}$ or on its complement).

 Now by Azuma's inequality (see e.g.  \cite[p. 501]{Pi4}) we know that
 $(f_n)$ is subgaussian with
 a good constant. In fact for any real numbers $t$ and 
 $x_n$ with
 $(x_n)$ in $\ell_2$
 \begin{equation}
 \label{e3}
 \E e^{t\sum x_n f_n} \le e^{t^2\sum |x_n|^2/2} .\end{equation}
 In particular
 \begin{equation}\label{e333}\P(\{ |S_n|> t\}) \le 2 e^{-t^2 /2n}.\end{equation}
 Fix $\vp>0$.
 Taking $a_n=c\sqrt n$, this gives us
  $$\P(\{ |S_n|> a_n\}) \le 2 e^{-c^2 /2},$$
  so we can choose a numerical value of $c$,
 namely $c=c_\vp$,  large enough so that 
   $$\P(\{ |S_n|> a_n\}) \le 1- (1+\vp)^{-2}.$$
   Then we have by what precedes
  $\|S_n\|_\infty\le a_{n-1}+1= c_\vp \sqrt {n-1}+1$ and
  $$\|f_n\|_2=\P(\{ |S_{n-1}|\le a_{n-1}  \})^{1/2} \ge (1+\vp)^{-1}$$
 for all $n$. Therefore  
   the Sidon constant of 
$\{f_1,\cdots,f_n\}$
is $\ge n/(1+a_{n-1})$.
Letting
$$\varphi_n=f_n \|f_n\|_2^{-1}$$
we find $\|\varphi_n \|_\infty \le 1+\vp$ for all $n$,
$ (\varphi_n) $ is orthonormal 
and \eqref{e1} holds.
By Azuma's inequality \eqref{e3}  we also have \eqref{e2}.
\end{proof} 
\begin{rem} I am grateful to B. Maurey
for   suggesting the following
neater example $(S'_k)$. 
Let us first fix $n\ge 1$, and hence $a_n>0$ is fixed.
Let $M_k=\vp_1+\cdots+\vp_k$ for all $k\ge 1$. Define the stopping time $T_n$
  by
$T_n=\inf\{k\ge 0\mid |M_k|>a_n\}$ and $T_n=\infty$ if $|M_k|\le a_n$ for all $k\ge 0$.
Recall the classical inequalities
$$\forall t>0\quad \P(\{  \sup_{1\le k\le n} |M_k|> t \} ) \le 2\P(\{   |M_n|> t \} )\le 4
e^{-t^2/2n}.$$
The first one goes back to  Paul L\'evy (see e.g. \cite[p. 28]{Pi4}),
it is closely related to D\'esir\'e Andr\'e's reflection principle for Brownian motion
(see e.g. \cite[p. 558]{Loe})
and the second one follows from \eqref{e333}.
We then set  for $k\ge 1$
$ S'_k=M_{k\wedge T_n}$ and $$f'_k=S'_k-S'_{k-1}=\vp_k 1_{\{ T_n\ge k \}}.$$
In the previous example this corresponds to sets $A'_{k-1}={\{ T_n\ge k \}}={\{ T_n\le k-1 \}^c}\in \cl A_{k-1}$.
We have clearly $\|S'_k\|_\infty \le a_n +1$ for all $k$,
and it is easy to check, 
since $A'_{k-1}=\{ \sup_{j<k} |M_j|\le a_n\}$,
that we again can choose $a_n= c_\vp \sqrt n$ so that
for any $1\le k\le n$ we have
$$\P(A'_{k-1})\ge \P(A'_{n})=\P(\{  \sup_{1\le k\le n} |M_k|\le a_n \} ) 
=1-\P(\{  \sup_{1\le k\le n} |M_k|> a_n \} ) \ge (1+\vp)^{-2}.$$

\end{rem}

\begin{rem} Since $(\varphi_n)$ is formed of mean zero variables
\eqref{e2} holds iff
  there is $\beta'$ such that
\begin{equation}
\forall p\ge 2 \ \forall  (x_n)\in \ell_2\quad
\|\sum x_n \varphi_n\|_p\le \beta'\sqrt p (\sum |x_n|^2)^{1/2}\end{equation}
\end{rem}
\begin{rem} 
Let  $(\varphi_n)$
be any orthonormal  system.
Then for any scalar coefficients $(x_k)$ we have obviously
$$\sum\nl_1^n |x_k|\le \sqrt n (\sum\nl_1^n |x_k|^2)^{1/2}
\le   \sqrt n \| \sum\nl_1^n x_k\varphi_k\|_\infty
.$$ Thus the order of growth of the Sidon constant
in \eqref{e1} and the next statement are both sharp.
\end{rem}
 \begin{cor} There are two 
 orthonormal martingale difference sequences
 $(\varphi^+_n)$
 and
 $(\varphi^-_n)$
with orthogonal linear spans
such that
each has the same distribution as the Rademacher functions
(i.e. each is formed of independent $\pm$-valued random variables
with mean zero) but their union is not a Sidon system.
More precisely the union of $\{\varphi^+_k\mid k\le n\}$
 and
 $\{\varphi^-_k\mid k\le n\}$
has a Sidon constant $C_n$ growing like $\sqrt n$.
\end{cor}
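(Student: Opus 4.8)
The plan is to reuse the random walk from the proof of Theorem \ref{t1}, but to recalibrate the cut-off levels so that each truncation set has probability exactly $1/2$; this is precisely what forces orthogonality of the two spans. Concretely, keep the i.i.d.\ signs $(\vp_n)$, set $S_0=0$, $S_n=S_{n-1}+\vp_n 1_{A_{n-1}}$ and $A_{n-1}=\{|S_{n-1}|\le a_{n-1}\}$, but now choose $a_{n-1}$ to be (essentially) a median of $|S_{n-1}|$, so that $\P(A_{n-1})=\tfrac12$. Since the increments of $S$ are bounded by $1$, Azuma's inequality \eqref{e333} gives $\P(|S_{n-1}|>t)\le 2e^{-t^2/2(n-1)}$, whence any such median satisfies $a_{n-1}\le c\sqrt{n-1}$ for a numerical constant $c$. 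Because the law of $|S_{n-1}|$ is atomic one cannot in general hit probability exactly $\tfrac12$; I would repair this by splitting the boundary atom $\{|S_{n-1}|=a_{n-1}\}$ with an auxiliary fair coin $\zeta_{n-1}$ independent of everything and enlarging $\cl A_{n-1}$ to contain it. This keeps $A_{n-1}\subseteq\{|S_{n-1}|\le a_{n-1}\}$ and does not disturb any argument below.

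Next, set $\eta_{n-1}=2\cdot 1_{A_{n-1}}-1\in\{-1,+1\}$ and define the two systems $\varphi_n^+=\vp_n$ and $\varphi_n^-=\vp_n\eta_{n-1}$. Each is an orthonormal martingale difference sequence for the (enlarged) filtration $(\cl A_n)$: indeed $\eta_{n-1}$ is $\cl A_{n-1}$-measurable and $\vp_n$ is a symmetric sign independent of $\cl A_{n-1}$, so $\E[\varphi_n^-\mid\cl A_{n-1}]=\eta_{n-1}\E[\vp_n\mid\cl A_{n-1}]=0$, and likewise for $\varphi_n^+$, while both are $\pm1$-valued. Moreover each has the law of the Rademacher functions: conditionally on $\cl A_{n-1}$ the variable $\varphi_n^-=\vp_n\eta_{n-1}$ is a fair sign, so by the chain rule $\P(\varphi_1^-=\d_1,\dots,\varphi_n^-=\d_n)=2^{-n}$ for every choice of signs $\d_j$, i.e.\ $(\varphi_n^-)$ is a sequence of independent symmetric signs (a ``predictable sign change'' of $(\vp_n)$ leaves the distribution unchanged). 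Finally the spans are orthogonal: if $j>k$ then $\vp_j$ is independent of the $\cl A_{j-1}$-measurable product $\vp_k\eta_{k-1}$ and has mean $0$; if $j<k$ then $\vp_k$ is independent of the $\cl A_{k-1}$-measurable product $\vp_j\eta_{k-1}$ and has mean $0$; so $\E[\vp_j\vp_k\eta_{k-1}]=0$ off the diagonal, while on the diagonal $\E[\vp_k^2\eta_{k-1}]=\E\eta_{k-1}=2\P(A_{k-1})-1=0$, which is exactly where the choice $\P(A_{k-1})=\tfrac12$ is used.

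It remains to bound the Sidon constant of the union from below, which I would do by testing \eqref{1bis} with the coefficients $\tfrac12$ on each of $\varphi_1^+,\dots,\varphi_n^+,\varphi_1^-,\dots,\varphi_n^-$: the $\ell_1$-norm of these coefficients is $n$, while
\[
\sum_{k=1}^n\left(\tfrac12\varphi_k^++\tfrac12\varphi_k^-\right)=\sum_{k=1}^n\tfrac12\vp_k(1+\eta_{k-1})=\sum_{k=1}^n\vp_k 1_{A_{k-1}}=S_n .
\]
By the same induction as in \eqref{e6}, $\|S_n\|_\infty\le a_{n-1}+1\le c\sqrt{n-1}+1$, so the Sidon constant $C_n$ of the union obeys $C_n\ge n/(c\sqrt{n-1}+1)\ge\d\sqrt n$ for some $\d>0$ and all large $n$. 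Combined with the elementary bound $C_n\le\sqrt{2n}$ valid for any orthonormal system (as noted in the remark above), this gives $C_n\asymp\sqrt n$, which is the asserted growth.

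The main obstacle is the tension at the heart of the construction: orthogonality of the two spans forces the centred indicator $\eta_{n-1}$ to be mean zero, i.e.\ $\P(A_{n-1})=\tfrac12$, whereas Theorem \ref{t1} needed $\P(A_{n-1})$ close to $1$ in order to control the normalisation $\|f_n\|_2$. Here that constraint disappears, since $\varphi_n^{\pm}$ are automatically $\pm1$-valued and require no normalisation, so we are free to take $\P(A_{n-1})=\tfrac12$; the only remaining delicacy is the exact calibration of the median (handled by the auxiliary coin $\zeta_{n-1}$) together with the verification that this predictable sign change genuinely yields an i.i.d.\ sign sequence, both of which are routine once set up as above.
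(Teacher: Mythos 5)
Your construction is essentially identical to the paper's: the paper sets $\varphi^{\pm}_n=\vp_n(1_{B_{n-1}}\pm 1_{\Omega\setminus B_{n-1}})$ with $B_{n-1}\subset\{|S'_{n-1}|\le a_{n-1}\}$ of probability exactly $1/2$, which is precisely your $\varphi^+_n=\vp_n$ and $\varphi^-_n=\vp_n\eta_{n-1}$, and the lower bound is obtained in both cases from $\frac12(\varphi^+_k+\varphi^-_k)=\vp_k 1_{A_{k-1}}$ together with $\|S_n\|_\infty\le 1+a_{n-1}$. The argument is correct; the only cosmetic differences are that you calibrate via a median (the paper keeps $a_n=c_\vp\sqrt n$ with $\vp=\sqrt 2-1$ and carves a subset of probability $1/2$ out of $\{|S'_n|\le a_n\}$), and your equidistribution/orthogonality verifications are just spelled-out versions of the paper's appeal to the fact that a $\{\pm1\}$-valued martingale difference sequence is uniformly distributed over sign choices.
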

 \begin{proof}  
 Let  will modify slightly the preceding proof and construct by induction a sequence
 $S'_n$.
 We  wish to choose by induction a set $B_n\subset \Omega$
 in  $\cl A_n$ 
 (just like $A_n$ was)
 and we again set $S'_n=S'_{n-1}+ \vp_n 1_{B_{n-1}}$.
  but we choose $B_n$  satisfying
 \begin{equation}\label{e5} B_n \subset  \{|S'_n|\le a_n\}  \quad \text{ and } \P(B_n)=1/2.\end{equation}
 To be able to make this choice all
 we need to know  is that
 $\P(\{|S'_n|\le a_n\}) \ge1/2.$
 Then the preceding argument, associated to $\vp=\sqrt 2-1$
 still guarantees that $\P( \{|S'_{n-1}|\le a_{n-1}\}) \ge 1/2$.
 Thus we clearly can select  $B_n$
 for which \eqref{e5} holds and we again obtain
 $\|S'_n\|_\infty \le 1+\sqrt {n-1}$ for all $n$.\\
Then let
 $$\varphi^{\pm}_n= \vp_n ( 1_{B_{n-1}} \pm 1_{\Omega\setminus B_{n-1} }).$$
 Note
 that since $\P(B_{n-1} )=1/2$ we have
$ \varphi^{+}_n \perp \varphi^{-}_n$ for any $n$
and hence
$ \varphi^{+}_n \perp \varphi^{-}_k$ for any $n,k$.
 Then  each of the sequences $\{\varphi^{\pm}_k\mid k\le n\}$
 is a martingale difference sequence with values
 in $\{\pm 1\}$. It is a well known fact 
 (proved by induction as a simple exercise) that
 this forces each to be distributed uniformly over all choices of signs.
 Now let 
 $\{\psi_k\mid k\le 2n\}$ denote the union of the two systems
 $\{\varphi^{+}_k\mid k\le n\}$ and $\{\varphi^{-}_k \mid k\le n\}$.
 Clearly the Sidon constant of  $\{\psi_k\mid k\le 2n\}$
 dominates that of $\{(\varphi^{+}_k+\varphi^{-}_k)/2\mid k\le n\}$.
 But the latter is the system $\{\vp_k 1_{B_{k-1}} \mid k\le n\}$
 as in the preceding proof 
 but with $B_k$ replacing $A_k$.
Since
 $\|S'_n  \|_\infty  \le 1+\sqrt {n-1}$,  
  \eqref{e1}  still holds for this system, so the corollary follows.
\end{proof} 

 \begin{rem} We may clearly replace
 $(\vp_n)$
by an i.i.d. sequence of
complex valued  variables $(z_n)$
uniformly distributed over the unit circle 
of $\C$. For those it is still true
that for any unimodular  sequence $(w_n)$
that is adapted
 (i.e.  $w_n$
is $\cl A_n$-measurable for each $n$)
the sequence $(z_n w_{n-1})$ is
independent and uniformly distributed over the unit circle.
Then the corresponding two sequences
$(\varphi^{\pm}_n)$  are Sidon with constant 1,
and their union is not Sidon
for the same reason as in the preceding corollary.
 \end{rem}
 
 \medskip
 
\n{\bf Problem :}  In \cite{BoLe} Bourgain and Lewko 
show that any $n$-tuple  forming a  $\beta$-subgaussian orthonormal
 system  uniformly bounded by a constant $C$ 
 contains a  subset of cardinality  $\ge \theta n$ with $\theta=\theta(\beta,C)>0$
 that is Sidon  
 with Sidon constant at most $f(\beta,C)$.
They ask whether
 any  such system is actually
  the union of $k(\beta,C)$ Sidon sequences
 with Sidon constant at most $f(\beta,C)$.\\
 Is this true for uniformly bounded martingale difference sequences
 normalized in $L_2$ ?
 
 \medskip
 Although for the example appearing in the proof of Theorem \ref{t1}
 the answer is affirmative (consider e.g. a partition into odd and even $k$'s),
 we believe that a more involved one with values in $\{-1,0,1\}$
 as in \eqref{e7} but with a more subtle
 choice of the predictable sets $A_{n-1}$,  should yield a counterexample.
 
 \medskip
 
 Let $M_n$ be the space of $n\times n$-matrices with complex
 entries, equipped with the usual operator norm on the 
 $n$-dimensional Hilbert space.
 In \cite{Pi3} we consider a non-commutative analogue involving
 a $n\times n$-matrix valued function $\varphi(t)=[\varphi(t)_{ij}]$
 on a probability space $(T,m)$ 
 for which the uniform boundedness
 condition is replaced by
 $$ \|\varphi(t)\|_{M_n} \le C$$
 and we assume
 that $\{\sqrt n \varphi(t)_{ij}\mid 1\le i,j\le n\}$ is $\beta$-subgaussian
 and orthonormal. The prototypical example
 is when $\varphi$ is uniformly distributed over the unitary group.\\
 In this situation we prove in  \cite[Prop. 5.4]{Pi3}
 that  there is a constant $\alpha=\alpha(C,\beta)$
 such that 
 $$\forall a\in M_n \quad
 \tr|a|\le \alpha \sup_{t_1,t_2\in T} |\tr(a\varphi(t_1)\varphi(t_2))| .$$
 In analogy with Theorem \ref{t1} it is natural to wonder
 what is the best  constant $C'_n $ such that
 in the same situation
 $$\forall a\in M_n \quad
 \tr|a|\le C'_n \sup_{t \in T} |\tr(a\varphi(t)  )| .$$
 Clearly the orthonormality  assumption  yields 
 $$\forall a\in M_n \quad
 n^{-1} \tr|a|\le (n^{-1} \tr|a|^2)^{1/2}=   \| \tr(a\varphi(t )  \|_2\le
  \| \tr(a\varphi(t )  \|_\infty=  \sup_{t \in T} |\tr(a\varphi(t ) | .$$
and hence  $C'_n \le n$. \\
It is easy to see that this is asymptotically optimal.
Indeed, consider the following example.
Let $x\mapsto D(x)$ be the mapping taking
an $n\times n$ matrix to its diagonal part.
Let $u$ denote a random $n\times n$ unitary matrix
uniformly distributed over the unitary group.
 Let $(\varphi_1 , \cdots,\varphi_n )$
 be the orthonormal $n$-tuple constructed
 in the proof of Theorem \ref{t1}, of which we 
 keep the notation, namely $\varphi_k=f_k \|f_k\|_2^{-1}$.
 Assuming
$(T,m)$ large enough, we define $\varphi: T \to M_n$
so that $\varphi -D(\varphi)$ and $D(\varphi)$
are independent random variables; we 
make sure that $\varphi -D(\varphi)$
and $u -D(u)$ have the same distribution
and we adjust the diagonal entries
of $D(\varphi)$ so that they have the same distribution
as $(\varphi_1/\sqrt n, \cdots,\varphi_n/\sqrt n)$.
Then for a suitable $\beta$ (independent of $n$)
$\{\sqrt n \varphi(t)_{ij}\mid 1\le i,j\le n\}$ is $\beta$-subgaussian
 and orthonormal.
 However, 
 if $a$
 is the diagonal matrix
 with entries $(\|f_1\|_2, \cdots,\|f_n\|_2)$ we have  on one hand by \eqref{e6}
 $\|\tr(a\varphi)\|_\infty= \|(f_1 + \cdots+f_n)/\sqrt n\|_\infty
 \le c_\vp$, and on the other hand
$\tr|a|\ge n(1+\vp)^{-1}$. Therefore
$$C'_n\ge n(1+\vp)^{-1} c_\vp^{-1}.$$

 \begin{dfn}
 Let $I$ be an index set. Let  $L_1(m'),L_1(m'')$ be arbitrary $L_1$-spaces.
We   say that a  family $(f_n)_{n\in I}$ in 
  $L_1(m'')$ 
 is $c$-dominated  by another one  $(\psi_n)_{n\in I}$ in 
$L_1(m')$ if there is a linear map $u:\ L_1(m')\to L_1(m'')$
with $\|u\|\le c$ such that $u(\psi_n)= f_n$ for all $n\in I$.
\end{dfn}
The following criterion due to 
M. L\'evy  (see \cite{Lev} and \cite[Prop.1.5]{Pi3}) is very useful:  
  a linear map $v: E \to L_1(m'')$
on a subspace $E\subset L_1(m')$ admits an extension
$  u: L_1(m') \to L_1(m'')$ with $\|    u\|\le 1$
  iff
for any finite sequence $(\eta_n)$ in $E$ we have
\begin{equation}\label{L2}\| \sup |v(\eta_n)|\|_{L_1(m'')} \le \| \sup |\eta_n|\|_{L_1(m')}.\end{equation}
If we apply this to $E={\rm span}[\psi_n]$ with $v$
defined by $v(\psi_n)= f_n$, 
this gives us the following criterion: a sequence
$(f_n)_{n\in I}$ in 
  ${L_1(m'')}$ 
 is $c$-dominated    by a sequence  $(\psi_n)_{n\in I}$ in 
  ${L_1(m')}$ iff for any Banach space
  $B$ and any finite sequence $(x_n)$ in $B$ we have
  \begin{equation}\label{L1} \|\sum f_n x_n \|_{L_1(B)} 
  \le c \|\sum \psi_n x_n \|_{L_1(B)} .\end{equation}
  Indeed, it is easy to see that we may restrict consideration
  to the single space $B=\ell_\infty$, in which case
  \eqref{L2} and  \eqref{L1} are identical.
\begin{rem}
The key fact used in \cite{Pi3} is that,
for some numerical constant $K$, any $\beta$-subgaussian sequence
$(\varphi_n)_{n\in \N}$ in $X=L_1(T,m)$
is $K\beta$-dominated  by 
a standard i.i.d. sequence of Gaussian normal variables (on a probability space
$(\Omega',\P')$), denoted by
$(g_n)_{n\in \N}$.
This is essentially due to Talagrand;
see  \cite{Pi3}  for detailed references and comments. It would be interesting to
have a direct simple proof of this fact.

If we assume moreover that the $\beta$-subgaussian sequence
$(\varphi_n)_{n\in \N}$ is uniformly bounded, i.e. that
$\|  \varphi_n\|_\infty \le \alpha$ for all $n$, then,
for some numerical constant $K'$, 
the sequence
$(\varphi_n)_{n\in \N}$  
is $K'(\beta+ \alpha)$-dominated 
by $(\vp_n)$.
This follows from  the solution by Bednorz and Lata\l a
 \cite{BLa}
of Talagrand's Bernoulli conjecture.
 \end{rem}
 
We would like to observe that if $(f_n)$ is a martingale difference sequence
then a very simple proof is available (with an optimal constant).
We start with a special case
of the form $f_n=\vp_n \varphi_{n-1}$
with $\varphi_{n-1}$ depending only on $\vp_1,\cdots,\vp_{n-1}$
satisfying $\|\varphi_{n-1}\|_\infty\le 1$ (which is subgaussian by \eqref{e3}).
This is particularly easy.
 Indeed, for any $y\in [-1,1]$
let $$F(t,y)=(-1) 1_{[0,(1-y)/2]}(t) + (1)1_{((1-y)/2,1]}(t) ,$$ so that
$\int_0^1 F(t,y) dt=  y$ and $   F(t,y)  =\pm 1$.
Let us consider the sequence of random variables $F_n$
defined on $[0,1]^\N\times \{-1,1\}^\N$ by setting
$$F_n( (t_j), (\vp_j)) = \vp_n F(t_{n-1}, \varphi_{n-1}).$$
Let  $u$ be the conditional expectation 
onto the algebra of functions depending on 
the second variable on $[0,1]^\N\times \{-1,1\}^\N$.
Then $u(F_n)=f_n$. Moreover since $(F_n)$ is a martingale
with values in $\pm 1$ it has the same distribution as
$(\vp_n)$ itself. 
In other words, there is
an isometry  $v  :L_1(\Omega,\P) \to L_1([0,1]^\N\times \{-1,1\}^\N)$
such that $v(\vp_n)= F_n$ for all $n$. Considering the composition $uv$,
this shows that $(f_n)$ is 1-dominated    by
$(\vp_n)$, and the latter is easily shown to be 
$c$-dominated  by $(g_n)$ (the latter being, say, in $L_1(\Omega',\P')$) for some numerical constant $c$.

More generally, let
$(\Omega', \cl A',\P')$ be an arbitrary probability space. We have
\begin{lem}\label{mm2} Let $\varphi\in L_1(\Omega', \cl A',\P')$ be with values in $[-1,1]$
and such that $\E \varphi=0$.
 Then for any Banach space
  $B$ and any $x_0,x_1\in B$
   \begin{equation}\label{e30}\E'\|x_0+\varphi x_1\|\le \E\|x_0+\vp_1 x_1\|.\end{equation}
   More generally,
   if $\cl B\subset \cl A'$ is any $\sigma$-subalgebra
   such that $\E^{\cl B}\varphi=0$ we have
   for any $x_0\in L_1(\Omega', \cl B,\P'; B)$
    \begin{equation}\label{e31}\E'\|x_0+\varphi x_1\|\le \E'\E\|x_0+\vp_1 x_1\|.
    \end{equation}
\end{lem}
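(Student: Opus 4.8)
The plan is to reduce \eqref{e31} to \eqref{e30} by conditioning, and to prove \eqref{e30} by exhibiting the worst case directly. First I would establish the simpler inequality \eqref{e30}. Since the left side $\E'\|x_0+\varphi x_1\|$ depends on $\varphi$ only through its distribution on $[-1,1]$, and the function $s\mapsto \|x_0+s x_1\|$ is convex on $\R$, the key observation is that among all $[-1,1]$-valued random variables with mean zero, the extreme points (in the sense of producing the largest expectation of a convex functional) are those supported on the two endpoints $\{-1,+1\}$. A mean-zero variable supported on $\{-1,1\}$ must put mass $1/2$ on each, which is exactly the law of $\vp_1$. More carefully, for any fixed realization one writes $\varphi = \lambda\cdot(+1)+(1-\lambda)\cdot(-1)$ at the level of its law only when $\varphi\in\{-1,1\}$; the clean way is to use that a mean-zero law $\mu$ on $[-1,1]$ is dominated in the convex (Choquet) order by the law of $\vp_1$, so that $\int \|x_0+sx_1\|\,d\mu(s)\le \tfrac12\|x_0+x_1\|+\tfrac12\|x_0-x_1\|=\E\|x_0+\vp_1 x_1\|$. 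This convex-order domination is the substantive point, and it is a standard balayage fact for measures on an interval with a common barycenter.

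Second, I would deduce \eqref{e31} by applying \eqref{e30} pointwise in the conditioning variable. The hypothesis $\E^{\cl B}\varphi=0$ says that, conditionally on $\cl B$, the variable $\varphi$ is still mean zero and still $[-1,1]$-valued. Fix $x_1\in B$ and treat $x_0\in L_1(\Omega',\cl B,\P';B)$ as a $\cl B$-measurable element. For $\P'$-almost every point, freezing the value of $x_0$ and applying \eqref{e30} to the conditional law of $\varphi$ given $\cl B$ gives
\begin{equation*}
\E'\big[\|x_0+\varphi x_1\|\,\big|\,\cl B\big]\le \E\|x_0+\vp_1 x_1\|,
\end{equation*}
where on the right the expectation is over an independent copy of $\vp_1$ and $x_0$ is held fixed as a $\cl B$-measurable vector. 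Taking $\E'$ of both sides and using the tower property yields exactly \eqref{e31}, since the right-hand side becomes $\E'\E\|x_0+\vp_1 x_1\|$.

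The main obstacle I anticipate is making the conditional version rigorous rather than formal. One must choose a regular conditional distribution for $\varphi$ given $\cl B$ so that "apply \eqref{e30} conditionally" is genuinely an almost-everywhere statement, and one must check the measurability needed to integrate the resulting conditional inequality; this is where one pays for working over an arbitrary probability space $(\Omega',\cl A',\P')$ and an arbitrary Banach space $B$. A convenient way to sidestep regular conditional distributions is to prove \eqref{e31} by testing against $\ell_\infty$, exactly as in the domination criterion \eqref{L1}: it suffices to verify the inequality when $B=\ell_\infty$, i.e. for finite families, and then the left and right sides become integrals of suprema that can be compared using the scalar convexity argument from the first step applied coordinatewise. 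Either route reduces everything to the one-dimensional convex-order fact, which is the genuine content of the lemma.
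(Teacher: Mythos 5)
Your proposal is correct and follows essentially the same route as the paper: the substantive point in both is the barycentric (convex-order) inequality $\|x_0+sx_1\|\le\frac{1-s}{2}\|x_0-x_1\|+\frac{1+s}{2}\|x_0+x_1\|$ for $s\in[-1,1]$, which the paper realizes concretely via the $\pm1$-valued kernel $F(t,y)$ and Jensen's inequality. One remark on the conditional step: the worry about regular conditional distributions is avoidable, and your proposed workaround (reducing to $B=\ell_\infty$) does not actually address it, since the difficulty you identify concerns disintegrating $\P'$ over $\cl B$, not the generality of the target Banach space. The clean fix --- which is in effect what the paper does, though it phrases the conditional expectation step through atoms and an approximation argument --- is to use the displayed scalar inequality pointwise in $\omega'$ with $s=\varphi(\omega')$ and then apply $\E^{\cl B}$: since $\|x_0\pm x_1\|$ is $\cl B$-measurable and bounded on the relevant sets and $\E^{\cl B}\varphi=0$, the terms involving $\varphi$ vanish by the defining property of conditional expectation, and no disintegration is needed.
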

\begin{proof}  We have
$$x_0+\varphi x_1= \int x_0 + F(t, \varphi) x_1 dt.$$
 and hence by Jensen
 $$\|x_0+\varphi x_1\|\le \int \|x_0 + F(t, \varphi) x_1 \| dt=\|x_0 - x_1 \|(1-\varphi)/2
 +\|x_0 + x_1 \| (1+\varphi)/2.$$
 After integration, we obtain \eqref{e30}.
 To prove \eqref{e31} it suffices to show that
\begin{equation}\label{e32} \E^{\cl B}\|x_0+\varphi x_1\|\le \E^{\cl B} (\|x_0+  x_1\|+\|x_0-  x_1\| )/2,  \end{equation}
or equivalently that for any $A\in \cl B$ with $\P'(A)>0$ we have
\begin{equation}\label{e32'} \P'(A)^{-1}\int_A\|x_0+\varphi x_1\|d\P'\le \P'(A)^{-1}\int_A (\|x_0+  x_1\|+\|x_0-  x_1\| )/2  d\P',  \end{equation}
 Assume that
 $A\in \cl B$ is an atom of $\cl B$. Then $x_0$ is constant on $A$
 and $\E^{\cl B}$ when restricted to $A$ coincides with the average over $A$.
 Thus \eqref{e32'} reduces to \eqref{e31} with $\P'$ replaced
 by $\P'(A)^{-1} \P'_{|A}$.
The case of a general $A\in \cl B$ can be proved by a routine approximation argument
left to the reader.
 \end{proof}
We now show that any real valued martingale difference sequence
with values in $[-1,1]$ is 1-dominated by $(\vp_n)$.

\begin{lem}\label{mm1}
Let $(d_n)$ be a sequence of real valued martingale differences on  
$(\Omega', \cl A',\P')$, i.e.  there are $\sigma$-subalgebras
$\cl A_n\subset \cl A$ ($n\ge 0$) forming an increasing filtration
such that $d_n$ is ${\cl A}_n$-measurable for all
$n\ge 0$ and $\E^{{\cl A}_{n-1} }d_n=0$ for all $n\ge 1 $.
We assume that $\cl A_0$ is trivial (so that $d_0$ is constant).
If $ |d_n|\le 1$ a.s. for any $n$, then there is
an operator $u: L_1(\Omega, \cl A,\P) \to L_1(\Omega', \cl A',\P')$
with $\|u\|= 1$ such that  $u(1)=1$ and $u(\vp_n)=d_n$ for all $n\ge 1 $.
\end{lem}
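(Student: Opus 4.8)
The plan is to verify the M.\ L\'evy domination inequality \eqref{L1} with constant $c=1$, which is exactly the criterion that produces the desired norm-one operator. Adjoin the constant function to both systems by setting $\psi_0=1$ on $(\Omega,\P)$ and $f_0=1$ on $(\Omega',\cl A',\P')$; then \eqref{L1} for the enlarged index set $\{0,1,2,\dots\}$ yields a contraction $u$ with $u(1)=1$ and $u(\vp_n)=d_n$ for all $n\ge1$, and since $u(1)=1$ forces $\|u\|\ge1$ we obtain $\|u\|=1$. Thus by \eqref{L1} it suffices to prove, for every Banach space $B$, every $N$, and all $x_0,x_1,\dots,x_N\in B$, that
$$\E'\Big\|x_0+\sum\nl_1^N d_n x_n\Big\|\le \E\Big\|x_0+\sum\nl_1^N \vp_n x_n\Big\|.$$

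The idea is to replace the martingale differences $d_N,d_{N-1},\dots,d_1$ by independent signs one at a time, from the top down, each replacement only increasing the $L_1(B)$-norm, via the conditional estimate \eqref{e31}. I would work on the enlarged probability space $\Omega'\times\{-1,1\}^{\{k+1,\dots,N\}}$ carrying $\P'$ together with independent fair signs $\vp_{k+1},\dots,\vp_N$, and for $0\le k\le N$ set
$$g_k=x_0+\sum\nl_1^{k} d_n x_n+\sum\nl_{k+1}^{N}\vp_n x_n.$$
Here $g_N$ lives on $\Omega'$, so $\E'\|g_N\|$ is the left-hand side above, while $g_0$ depends only on the signs $\vp_1,\dots,\vp_N$, so its average is the right-hand side. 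The claim is that $\E\|g_k\|\le\E\|g_{k-1}\|$ for each $1\le k\le N$, where $\E$ denotes expectation over the relevant variables at each stage; telescoping these inequalities then finishes the proof.

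For the inductive step I would freeze $h=x_0+\sum\nl_1^{k-1}d_n x_n+\sum\nl_{k+1}^{N}\vp_n x_n$, so that $g_k=h+d_k x_k$. Let $\cl B$ be the $\sigma$-algebra on the product space generated by $\cl A_{k-1}$ together with $\vp_{k+1},\dots,\vp_N$. Because the fresh signs are independent of $\Omega'$, conditioning on $\cl B$ affects $d_k$ exactly as $\E^{\cl A_{k-1}}$ does, so $\E^{\cl B}d_k=\E^{\cl A_{k-1}}d_k=0$; moreover $h$ is $\cl B$-measurable and $B$-valued. Applying Lemma \ref{mm2}, inequality \eqref{e31}, on this ambient space with $\varphi=d_k$, with $x_0$ replaced by $h$, with $x_1$ replaced by $x_k$, and with a fresh sign $\vp_k$, gives $\E\|h+d_k x_k\|\le\E\|h+\vp_k x_k\|=\E\|g_{k-1}\|$, which is precisely the claim.

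The only delicate point is this measurability bookkeeping: one must check that adjoining the previously introduced independent signs $\vp_{k+1},\dots,\vp_N$ to $\cl A_{k-1}$ does not disturb the mean-zero property of $d_k$, and this is exactly where independence of the fresh signs from $(\Omega',\cl A',\P')$ enters. Once that is in place, each single-step estimate is nothing but the conditional Jensen bound already packaged in Lemma \ref{mm2}, so no further computation is required.
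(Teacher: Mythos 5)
Your proposal is correct and follows essentially the same route as the paper: both reduce to the L\'evy criterion \eqref{L1} and then replace the martingale differences by independent signs one at a time from the top down, applying \eqref{e31} at each step with $\cl B$ generated by $\cl A_{k-1}$ together with the already-introduced signs on the enlarged product space. Your explicit remark that the fresh signs, being independent of $(\Omega',\cl A',\P')$, preserve the condition $\E^{\cl B}d_k=0$ is exactly the bookkeeping the paper leaves implicit in its phrase ``working on the product space.''
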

\begin{proof}  
By the above criterion \eqref{L1}
it suffices to show that for any
Banach space $B$ and   any  finite sequence $(x_n)$ in $B$
we have for any $k$
\begin{equation}\label{e33}\| d_0 x_0+ \sum\nl_1^{k} d_n x_n\|_{L_1(B)}
\le   \| d_0 x_0+  \sum\nl_1^{k} \vp_n x_n\|_{L_1(B)} .\end{equation}
By  \eqref{e31} with $\cl B= \cl A_{k-1}$ and $\varphi=d_k$  we have 
$$\| d_0 x_0+ \sum\nl_1^{k} d_n x_n\|_{L_1(B)}
\le   \| d_0 x_0+  \sum\nl_1^{k-1} d_n x_n + \vp_k x_k\|_{L_1(\P'\times \P;B)} .$$
Now working on the product space
$(\Omega, \cl A,\P) \times (\Omega', \cl A',\P')$
with $\cl B$ equal to $\sigma( \cl A_{k-2} \cup \vp_k) $
we find
$$ \| d_0 x_0+  \sum\nl_1^{k-1} d_n x_n + \vp_k x_k\|_{L_1(\P'\times \P;B)}
\le  \| d_0 x_0+  \sum\nl_1^{k-2} d_n x_n + \vp_{k-1} x_{k-1} + \vp_k x_k\|_{L_1(\P'\times \P;B)}.$$
Continuing in this way we obtain \eqref{e33}.
\end{proof}

\begin{rem}[On the complex valued case in Lemma \ref{mm1}]\label{mm3}
Let $\T=\R/2\pi\Z$ be the (one dimensional) torus.
Consider the sequence $(z_n)_{n\in \N}$ formed 
of the coordinate functions on $\T^{\N} $ equipped with its normalized Haar measure
$\mu$. A priori the complex analogue of the preceding proof,
with $(z_n)$ replacing $(\vp_n)$,
requires to assume that  the martingale under consideration is
a Hardy martingale in the sense described e.g. in \cite[p. 133]{Pi4}.
Indeed, the Poisson kernel is the natural analogue
of the barycentric argument we use for Lemma \ref{mm2}.
Using this, Lemma \ref{mm1} remains valid, with $(z_n)$ replacing $(\vp_n)$, for a
martingale difference sequence $(d_n)$
adapted to the usual filtration on $\T^\N$ such that
for any $n$ the variable $z\mapsto d_n(z_0,\cdots,z_{n-1}, z)$
is either analytic or anti-analytic. \\
Note that 
without any additional assumption
the complex valued case of Lemma \ref{mm1}
fails, simply because the system $(1,\vp_1)$ is not 1-dominated
by $(1,z_1)$. Indeed,  by \eqref{L2} this would imply the inequality
$2=\int \max\{|1+\vp_1| ,|1-\vp_1| \}d\P\le \int \max\{|1+z_1| ,|1-z_1| \}d\mu$,
which clearly fails.
 \end{rem}
 
   The next two remarks will be used at the very end of this paper.
   \begin{rem}
\label{n30}
Let  $(z_n)_{n\in \N}$ and 
$\mu$ on $\T^{\N} $ be as in Remark \ref{mm3}. Consider two sequences $(f^1_n)$
and $(f^2_n)$  in 
an $L_1$-space $X$. We form their ``disjoint union"
$(f_n)$ by setting $f_{2k}= f^2_k$ and
$f_{2k+1}= f^1_k$.
We claim that if $(f^1_n)$ (resp. $(f^2_n)$)  is 
$c_1$-dominated (resp. $c_2$-dominated)   by  $(z_n)$,
then  $(f_n)$ is $(c_1+c_2)$-dominated    by  $(z_n)$.
Actually, the same claim is valid for the disjoint union
of arbitrary families indexed by   sets $I_1$ and $I_2$ (using $(z_n)_{n\in {I_1 \dot\cup I_2}}$
on $\T^{I_1 \dot\cup I_2}$ instead), but the idea is easier to describe with $I=\N$.
Indeed, since $(z_n)$, $(z_{2n})$ and $(z_{2n+1})$ all have the same distribution,
there is  
 $u_j:\ L_1(\T^\N,\mu)\to X$ ($j=1,2$) with $\|u_j\|\le c_j$ such that $u_2(z_{2n})=f^2_n$ 
 and $u_1(z_{2n+1})=f^1_n$.  Let $\E_1$ and  $\E_2$ be the conditional expectations
 on $L_1(\T^\N,\mu)$ with respect to the $\sigma$-algebras generated
respectively by  $(z_{2n+1})$ and $(z_{2n})$.
Then let $u=u_1\E_1+u_2\E_2$.
We have $u(z_n)=f_n$ for all $n$ and $\|u\|\le \|u_1\E_1\|+\|u_2\E_2\|\le c_1+c_2$.
This proves our claim.
   \end{rem} 
 \begin{rem}\label{n10} Let  $(z_n)$ be as in Remark \ref{n30}
 on $(\T^{\N},\mu  )$. Let $(\varphi_n)$ be in $L_\infty(T,m)$.
 We claim that if  $\|\varphi_n\|_\infty\le1$ for all $n$,
 then $(\varphi_n \otimes z_n)$ 
is dominated   by $(z_n)$. 
 Assume first $|\varphi_n|=1$ a.e. for all $n$.
 Then the  translation invariance
 of the distribution of $(z_n)$ shows that
 $(\varphi_n \otimes z_n)$ has the same distribution 
as $(z_n)$, so the claim is obvious in this case.  
Note that any number $\varphi \in \C$ with $|\varphi|\le 1$
is an average of two points on the unit circle. Using this
it is easy to verify the claim. It can also be checked easily using
the criterion in
\eqref{L1}.
 \end{rem}

 We end this paper by an outline of
 a  proof that the union of two Sidon sequences
 is $\otimes^4$-Sidon, more direct than the one
 in \cite{Pi3}. The route we use avoids
 the consideration of randomly Sidon sequences, it is essentially
the commutative  analogue  of the proof in \cite{Pi5},
with the free Abelian group replacing the   free group.
 The key fact for the latter route is still
 the following: 
 \begin{lem}\label{ke1}
Let $(z_n)$  be  in $L_\infty(\T^{\N} ,\mu)$ as in Remark \ref{n30}.
Let $(T,m)$ be a probability space. 
Let $(f_n)$ be  a sequence in $L_1(T,m)$ that is
dominated   by  $(z_n)$. 
 Then any 
sequence  $(\psi_n)$ in $L_\infty(T,m)$ that is both 
uniformly bounded   and  biorthogonal to 
  $(f_n)$
is $\otimes^2$-Sidon. Here biorthogonal means
$$\forall n,m \quad \int \psi_n f_m= \d_{nm}.$$
\end{lem}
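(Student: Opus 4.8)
The plan is to dualize the $\otimes^2$-Sidon property into an interpolation statement and then to manufacture the interpolating kernel by transporting a Riesz product from the free abelian group through the domination map. The first thing I would record is the reformulation that makes the two hypotheses click together. Since $\int\psi_k f_n\,dm=\delta_{kn}$, the tensor square $(\psi_n\otimes\psi_n)$ is uniformly bounded and biorthogonal to $(f_n\otimes f_n)$ in $L_1(m\times m)$; moreover $f_n\otimes f_n=(u\otimes u)(z_n\otimes z_n)$, where $u:L_1(\mu)\to L_1(m)$ is the domination operator ($uz_n=f_n$, $\|u\|\le c$), so $(f_n\otimes f_n)$ is $c^2$-dominated by $(z_n\otimes z_n)$ on $(\T^{\N})^2$. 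By the definition of Sidon together with $L_1$--$L_\infty$ duality, showing that $(\psi_n\otimes\psi_n)$ is Sidon is equivalent to the following: for every finitely supported scalar family $(\alpha_n)$ with $|\alpha_n|\le1$ there is $H\in L_1(m\times m)$ with $\|H\|_1\le C$ and $\int(\psi_n\otimes\psi_n)\,H=\alpha_n$ for all $n$, where $C$ depends only on $c$ and $\sup_n\|\psi_n\|_\infty$.

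Next I would build $H$. The structural fact I would exploit is that $(z_n\otimes z_n)$ is a \emph{dissociate} family of characters on the dual of $\Z^{(\N)}\times\Z^{(\N)}$ (the elements $(e_n,e_n)$ are independent), so it is Sidon with a universal constant and, crucially, supports genuine Riesz products: $K=\prod_n\bigl(1+\mathrm{Re}(\bar\alpha_n\,z_n\otimes z_n)\bigr)$ is a nonnegative density on $(\T^{\N})^2$ with $\|K\|_{L_1}=1$ and prescribed coefficients on the generators $z_n\otimes z_n$. Transporting it, $H:=(u\otimes u)(K)$ then satisfies $\|H\|_1\le\|u\|^2\|K\|_1=c^2$ for free. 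The clean conceptual mechanism behind the interpolation identity is a \emph{double extraction}: if one contracts each leg of a one-variable Riesz product $R$ on $\T^{\N}$ against $\Phi(w,\cdot):=\sum_m\overline{z_m(w)}\,f_m$, then biorthogonality turns each contraction with $\psi_n$ into $\overline{z_n(w)}$, and the pairing collapses onto the single Fourier coefficient $\widehat R(z_n^{2})$, which can be prescribed to equal $\alpha_n$ with no error term. It is precisely the \emph{two} biorthogonality relations — the tensor-square structure — that isolate this one coefficient; with a single factor the analogous identity fails, consistent with Theorem \ref{t1} showing that the $\otimes^1$ statement is false.

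The hard part, and the step I expect to demand the most care, is to reconcile the two demands that pull against each other: exact interpolation and a uniform $L_1$ bound. The double-extraction kernel just described realizes the identity exactly, but its $L_1$-norm grows like the square root of the number of terms (it unwinds to the diagonal kernel $\sum_n\alpha_n f_n\otimes f_n$), so it is useless as it stands; conversely the transported Riesz product $(u\otimes u)(K)$ has bounded norm but, paired through $u^{*}$, a priori produces — besides the wanted diagonal term — higher-order and ``conjugate'' contributions coming from the monomials of $K$ of length $\ge2$. The crux is therefore to produce a single kernel enjoying both the bounded norm and the vanishing of these off-diagonal contributions. This is the commutative analogue of the reduced-word bookkeeping in \cite{Pi5}: in the free group one projects onto words of bounded length, a bounded operation there but \emph{unbounded} on $L_1(\T^{\N})$, so here the required cancellation must be extracted from the dissociateness of the generators together with the two biorthogonality relations rather than from a projection. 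Making this quantitative — so that the off-diagonal terms either cancel outright or sum to a uniformly small correction that can be absorbed by a Neumann/fixed-point argument in the parameters $(\alpha_n)$ — is where I expect the real work of the proof to lie.
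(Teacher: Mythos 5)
You have correctly identified the paper's toolbox---dualize, exploit that $(z_n\otimes z_n)$ is a dissociate system supporting Riesz products, and transport through the domination operator $u$ (the paper transposes via $u^*$ to reduce to the universal case $(T,m)=(\T^{\N},\mu)$, $f_n=z_n$, $\psi_n$ biorthogonal to $(z_n)$, which is essentially equivalent to your reduction). But the proposal stops exactly where the proof has to happen: you yourself write that controlling the length-$\ge 2$ and conjugate contributions ``is where I expect the real work of the proof to lie,'' and you do not carry out that step. That is a genuine gap, not a routine verification left to the reader.

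Moreover, the way you have framed the problem makes the gap harder to close than necessary. By insisting on \emph{exact} interpolation ($\int(\psi_n\otimes\psi_n)H=\alpha_n$) with the full-amplitude Riesz product $K=\prod_n\bigl(1+\Re(\ovl{\alpha_n}\,z_n\otimes z_n)\bigr)$, you must kill error terms whose coefficients are of size $2^{-|\alpha|}$, i.e.\ of order one already at degree two; biorthogonality gives no information on $\widehat{u^*\psi_n}(\gamma)$ for characters $\gamma\ne z_n$, so these contributions need not vanish and are not small. The paper's resolution is to give up exact interpolation: it uses the Riesz product $\nu_\vp=\prod(1+\vp\Re(z_n^0z_nz_n'))$ with \emph{small} amplitude $\vp$ and decomposes $\sum z_n^0\,z_n\otimes z_n=t_\vp+r_\vp$, where $t_\vp$ is built from $(\nu_\vp-1)/\vp$ (then averaged against $\ovl{\omega}$ over a rotation $\omega\in\T$ to delete the conjugate degree-one term) and satisfies $\|t_\vp\|_{\wedge}\le 4/\vp$ because $\nu_\vp\ge0$ has $L_1$-norm one, while the remainder $r_\vp$---consisting of mutually orthogonal higher-order characters whose coefficients, after division by $\vp$, are $O(\vp)$---has \emph{injective} norm $\|r_\vp\|_{\vee}\le\vp$. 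Pairing with $\Psi=\sum a_n\psi_n\otimes\psi_n$ then yields $\sum|a_n|\le w(\vp)\|\Psi\|_\infty+\vp\,(\sup_n\|\psi_n\|_\infty)^2\sum|a_n|$, and the error is absorbed by choosing $\vp=1/(2\sup_n\|\psi_n\|_\infty^2)$. This one-step absorption---which is where the uniform boundedness of $(\psi_n)$ enters, a hypothesis your scheme never uses at the decisive moment---replaces the ``Neumann/fixed-point argument'' you gesture at; without the small parameter $\vp$ the argument does not close.
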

 \begin{proof} 
 Let $u: L_1(\T^{\N} ,\mu) \to L_1(T,m)$
 such that $u(z_n)=f_n$.
 Elementary considerations show that
 it suffices to show that the sequence $(u^*(\psi_n))$
 is $\otimes^2$-Sidon. 
 By another elementary argument $(u^*(\psi_n))$
 is biorthogonal to $(z_n)$. 
 Therefore, it suffices to prove this Lemma
for the case $(T,m) =(\T^{\N} ,\mu)$ and $(\psi_n)=(z_n)$.
This is proved in \cite{Pi3} with $(z_n)$ replaced by  an i.i.d. gaussian sequence, using the Ornstein-Uhlenbeck (or Mehler) semigroup. 
Here we may use Riesz products instead.

We claim that
for any $N$ and any $z^0\in  \T^{\N}$ the function 
$F=\sum\nl_1^N z_n^0 z_n \otimes z_n $
admits for any $0<\vp\le 1$  a decomposition 
 $F=t_\vp+r_\vp$ in the algebraic tensor product $L_1(\T^{\N})\otimes L_1(\T^{\N})$ with 
 $$ \|t_\vp\|_{\wedge}=\int |t_\vp(x,y)| d\mu(x)d\mu(y) \le w(\vp)\text{
 and  } 
 \|r_\vp\|_{\vee} \le \vp,$$
 where we have set
 $$ \|r_\vp\|_{\vee} =\sup\nl_{a,b\in B_{L_\infty}} \left|  \int r_\vp(x,y) a(x)b(y) d\mu(x)d\mu(y)\right|  ,$$ and
 where
 $w(\vp)$ is a function depending only on $0<\vp\le 1$
 (and not on $N$ or $z^0$).
 To verify this we fix $z^0$ and  consider in $L_1( \T^{\N}\times \T^{\N}, \mu\times \mu   )$ the Riesz product
 $$\nu_\vp(z,z')= \prod\nl_1^N (1+\vp \Re(z_n^0 z_nz_n'))=\sum\nl_{\alpha \subset [1...N] } (\vp/2)^{|\alpha|}  \prod_{n\in \alpha} (z_n^0 z_nz_n'+\ovl{z_n^0 z_nz_n'}).$$
We  will view the tensors in  $L_1(\T^{\N})\otimes L_1(\T^{\N})$
as functions of   $(z,z')\in \T^{\N}\times \T^{\N}$.
 Note
    \begin{equation}\label{e41}\nu_\vp(z,z') =\sum\nl_{\alpha \subset [1...N] } (\vp/2)^{|\alpha|}  \sum_{\beta\subset \alpha}\  \prod_{n\in \beta}  z_n^0 z_nz_n' \prod_{n\in [1...N]\setminus \beta} \ovl{z_n^0 z_nz_n'}.\end{equation}
 Observe that the terms of the latter sum are orthogonal.
 Without trying to optimize (see \cite{Pi3} for a discussion of the optimal logarithmic
 growth for $w$) we set
 $$t'_\vp =   ( \nu_\vp-\nu_0)/\vp.$$
 Note that (since $\nu_\vp\ge 0$ and hence $\|\nu_\vp\|_1=1$) we have $\|t'_\vp\|_{\wedge}\le 2/\vp$.
 Let $r'_\vp= \sum\nl_1^N \Re(z_n^0 z_nz_n')) -t'_\vp$.
 %We have $$ \sum\nl_1^N \Re(z_n^0 z_nz_n'))=t'_\vp(z,z')+r'_\vp(z,z'),$$
  By the orthogonality in the sum \eqref{e41}  one checks that $\|r'_\vp\|_{\vee}\le \vp/2$.
 This gives us the desired decomposition
  but,  instead of $\sum\nl_1^N {z_n^0 z_nz_n'}$, we are decomposing the sum
 $$ \sum\nl_1^N \Re(z_n^0 z_nz_n'))=(1/2)\sum\nl_1^N {z_n^0 z_nz_n'}  + (1/2)\sum\nl_1^N \ovl{z_n^0 z_nz_n'} .$$
 To remove the second term  we introduce an extra variable
 $\omega\in \T$ that acts on $\T^\N$ by multiplication ( i.e. $\omega (z_n)= (\omega z_n)  $) and we define (here $m_\T$ is normalized Haar measure on $\T$)
 $$t_\vp(z,z')=2\int  \bar{\omega} t'_\vp(\omega z,z') dm_\T(\omega)
 \text{   and   } r_\vp(z,z')=2\int  \bar{\omega} r'_\vp(\omega z,z') dm_\T(\omega).$$
    This gives us $\| t_\vp \|_{\wedge}\le 4/\vp$ and $\| r_\vp \|_{\vee}\le \vp$. Moreover we have
    $$(1/2)\sum\nl_1^N {z_n^0 z_nz_n'}= (1/2) t_\vp+ (1/2) r_\vp$$
     which proves the claim  with $w(\vp)\le 4/\vp$.
 
 We can now complete the proof.
 Let $(a_n)$ be a scalar sequence. 
 Let $\Psi= \sum\nl_1^N a_n \psi_n \otimes \psi_n$.
 Choosing $z_n^0$ so that $z_n^0 a_n=  |a_n|$ we 
  have $$\langle \Psi,F \rangle =\sum z_n^0 a_n=\sum |a_n|,$$
  and hence
  $\sum |a_n|=\langle \Psi, t_\vp\rangle + \langle \Psi, r_\vp\rangle $
  which leads to 
 $$\sum |a_n| \le   \|\Psi\|_\infty w(\vp) +\sum |a_n| \vp (\sup\nl_{1\le n\le N} \|\psi_n\|_\infty^2).$$
 To conclude, we set $C'=\sup\nl_{n\ge 1} \|\psi_n\|_\infty$ and we choose, say,
 $\vp= 1/2C'^2$. We have then
 $$\sum |a_n| \le 2 w(\vp)  \|\Psi\|_\infty.$$
 \end{proof}

 Let us say that a  bounded set $S$ in $  L_\infty(T,m)$ is Sidon 
  with constant $C$ if for any finitely supported  function $x: S \to \C$ 
  we have
  $\sum\nl_{\varphi\in S} |x(\varphi)| \le C \|\sum x(\varphi) \varphi\| .$
 If $(\varphi_n)$ is an enumeration of $S$, this is the same
 as $\sum\nl_{n\in \N} |x(n)| \le C \|\sum\nl_{n\in \N} x(n) \varphi_n\| .$
 Similarly we  extend the term 
 $\otimes^4$-Sidon to sets in $  L_\infty(T,m)$.

 For the convenience of the reader we give a slightly more direct proof of
 the following result from \cite{Pi3}, which generalizes Drury's theorem.
  \begin{thm}\label{n1}
Let $\Lambda_1=\{\varphi^1_n\mid n\in I(2)\}$ and $
\Lambda_2=\{\varphi^2_n\mid n\in I(1)\}$ be two Sidon sets (indexed by sets
$I(1),I(2)$)
in $  L_\infty(T,m)$, with constants $C_1,C_2$.
Assume that  $ \Lambda_1  \perp   \Lambda_2$ in $L_2(m)$ and  
there are $C'_1,C'_2,\d>0 $
such that
$$\forall n \quad \d\le \|\varphi^1_n\|_2\le \|\varphi^1_n\|_\infty\le C'_1  \text{  and  } \d\le \|\varphi^2_n\|_2\le\|\varphi^2_n\|_\infty\le C'_2 .$$
Then the union $ \Lambda_1  \cup
    \Lambda_2$
    is $\otimes^4$-Sidon with a constant $C$ depending only
    on  $C_1,C_2,C'_1,C'_2,\d$.
     \end{thm}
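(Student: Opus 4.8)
The goal is to bound $\sum_n|a_n|$ by a fixed multiple of $\big\|\sum_n a_n\,\psi_n\otimes\psi_n\otimes\psi_n\otimes\psi_n\big\|_{L_\infty(T^4)}$, where $(\psi_n)$ enumerates $\Lambda_1\cup\Lambda_2$. Since $(\psi_n)$ is $\otimes^4$-Sidon precisely when $(\psi_n\otimes\psi_n)$ is $\otimes^2$-Sidon, my plan is to feed an auxiliary \emph{randomized} system into Lemma \ref{ke1}, and then to eliminate the auxiliary variables at the very end, which is where the Sidon property of each family and the orthogonality $\Lambda_1\perp\Lambda_2$ enter.

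First I would randomize. Introduce the free abelian variables $(z_n)$ on $\T^\N$ and set $\Theta_n=\psi_n\otimes z_n$ in $L_\infty(T\times\T^\N)$, together with $f_n=\|\psi_n\|_2^{-2}\,\ovl{\psi_n}\otimes\ovl{z_n}$. Because $\int z_n\ovl{z_m}=\d_{nm}$ takes care of the off-diagonal and $\|\psi_n\|_2\ge\d$ normalizes the diagonal, $(f_n)$ is biorthogonal to $(\Theta_n)$; and $(f_n)$ is dominated by $(z_n)$: writing $\Lambda_1\cup\Lambda_2$ as the disjoint union of its two families, Remark \ref{n10} shows that each $\ovl{\psi_n}\otimes\ovl{z_n}$ (normalized by $\max(C'_1,C'_2)$) is dominated by $(z_n)$, Remark \ref{n30} assembles the two families into a single $(z_n)$-dominated sequence, and the scalar factors $\|\psi_n\|_2^{-2}\le\d^{-2}$ are absorbed by the contraction principle for Steinhaus variables. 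Lemma \ref{ke1} then yields that $(\Theta_n)$ is $\otimes^2$-Sidon, i.e. for some $C''$ and all finitely supported $(a_n)$
$$\sum_n|a_n|\le C''\Big\|\sum_n a_n\,\psi_n(t_1)\psi_n(t_2)\,z_n(\omega_1)z_n(\omega_2)\Big\|_{L_\infty(T^2\times(\T^\N)^2)}.$$

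Second, I would remove the two torus variables in favour of two further copies of $T$, which is routine for a \emph{single} family. Fixing the unimodular numbers $\theta_n:=z_n(\omega_1)z_n(\omega_2)$, the Sidon property of $\Lambda_i$ furnishes a measure $\lambda$ on $T$ with $\|\lambda\|\le C_i$ and $\int\varphi^i_n\,d\lambda=\theta_n^{1/2}$; integrating the four-fold tensor against $\lambda$ in the two new slots realizes $\theta_n=(\int\varphi^i_n\,d\lambda)^2$ and gives, uniformly in $(\theta_n)$,
$$\Big\|\sum_n a_n\theta_n\,\varphi^i_n(t_1)\varphi^i_n(t_2)\Big\|_\infty\le C_i^2\,\Big\|\sum_n a_n\,(\varphi^i_n)^{\otimes4}\Big\|_\infty,$$
which is exactly the desired elimination of $\omega_1,\omega_2$ at the cost of two extra tensor factors (hence the power $4$).

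The hard part is to perform this elimination for the union of the two families \emph{simultaneously}. One cannot simply interpolate the values $\theta_n^{1/2}$ by a single bounded measure on $\Lambda_1\cup\Lambda_2$: that would make the union itself Sidon, which is false; nor can one interpolate one family while annihilating the other by $L_2$-duality, since the dual functionals of a Sidon set have $L_1$-norm of the order of the square root of their number and destroy every uniform bound. This is precisely where I would use $\Lambda_1\perp\Lambda_2$, and use it \emph{together with} the randomization retained from the first step rather than after discarding it. The plan is to interpolate the two families in the two separate new slots by measures adapted to $\Lambda_1$ and to $\Lambda_2$, and to exploit the orthogonality $\Lambda_1\perp\Lambda_2$ together with the independence of the blocks $(z_n)_{n\in I(1)}$ and $(z_n)_{n\in I(2)}$ so that the unwanted cross-contribution of each measure on the opposite family cancels on average; since the displayed norm is a supremum, which dominates any average, this cancellation is enough. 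This is the commutative, Riesz-product incarnation (as in the proof of Lemma \ref{ke1}) of the free-group argument of \cite{Pi5}, with the free abelian group $\Z^{(\N)}$ replacing the free group and orthogonality replacing the disjointness of the two generating sets; once the cross terms are shown to average out, the two families are removed together and one obtains the $\otimes^4$-Sidon estimate with a constant depending only on $C_1,C_2,C'_1,C'_2,\d$.
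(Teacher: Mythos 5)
Your first step is correct but proves much less than you need, and your second step --- which is where the whole content of the theorem lies --- is asserted rather than proved. Concretely: the application of Lemma \ref{ke1} to $\Theta_n=\psi_n\otimes z_n$ with $f_n=\|\psi_n\|_2^{-2}\,\ovl{\psi_n}\otimes\ovl{z_n}$ is fine, but notice that it uses neither the Sidon property of $\Lambda_1,\Lambda_2$ nor the orthogonality $\Lambda_1\perp\Lambda_2$: the factor $z_n$ alone makes $(f_n)$ biorthogonal to $(\Theta_n)$ and dominated by $(z_n)$. So what you obtain holds for \emph{every} uniformly bounded system with $\|\psi_n\|_2\ge \delta$; it is essentially the statement that $(\psi_n)$ is uniformly randomly $\otimes^2$-Sidon. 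The remaining task --- trading $z_n(\omega_1)z_n(\omega_2)$ for two more copies of $T$ --- is exactly the implication ``randomly Sidon $\Rightarrow$ $\otimes^4$-Sidon'', i.e.\ the main theorem of \cite{Pi3}, which is precisely the route this paper announces it wants to avoid. Your sketch of that step does not work as stated: for fixed $\omega_1,\omega_2$ the interpolating measure $\lambda$ with $\langle\lambda,\varphi^1_n\rangle=\theta_n^{1/2}$ comes from duality/Hahn--Banach, so it has no canonical or measurable dependence on $(\theta_n)$ over which one could average; and the unwanted contribution $\sum_{n\in I(2)}a_n\langle\lambda,\varphi^2_n\rangle^2\psi_n(t_1)\psi_n(t_2)$ involves the numbers $\langle\lambda,\varphi^2_n\rangle$, about which the hypothesis $\Lambda_1\perp\Lambda_2$ says nothing at all ($\lambda$ is an arbitrary bounded measure on $T$, not an element of the span of $\Lambda_1$). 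Moreover, the only randomness available to average over in the $I(2)$-block is the very block of $(z_n)$ carrying the coefficients you are trying to retain, so no such averaging can make these cross terms vanish while preserving the main term.

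The paper's proof uses the two hypotheses in the opposite order and thereby never needs to de-randomize. The Sidon property is used \emph{first}, to build bounded operators $T_j$ on ${\rm span}(\varphi^j_n)$ with $T_j(\varphi^j_n)=z_n$, extended to all of $L_\infty(T,m)$ by injectivity of $L_\infty(\T^\N)$; placing the two extensions in separate torus variables and adding them gives $W:L_\infty(T\times T)\to L_\infty(\T^\N\times\T^\N\times T)$ with $\|W\|\le C_1+C_2$. Lemma \ref{ke1} is then applied with the \emph{bounded} biorthogonal system $\cl U=\{W(\varphi^j_n\otimes\varphi^j_n)\}$ and the dominated system $\cl V=\{\ovl{z_n\otimes 1\otimes\varphi^1_n}\}\cup\{\ovl{1\otimes z_n\otimes\varphi^2_n}\}$ (Remarks \ref{n10} and \ref{n30}); the orthogonality $\Lambda_1\perp\Lambda_2$ is used to verify \emph{exact} (not average) biorthogonality, killing the cross terms $1\otimes\tilde T_2(\varphi^1_n)\otimes\varphi^1_n$. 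The resulting $\otimes^2$-Sidonicity of $\cl U$ pulls back through the bounded operator $W$ to $\{\varphi^j_n\otimes\varphi^j_n\}$, which is the desired $\otimes^4$-Sidonicity of the union. To complete your argument you would need either to adopt this device or to supply a full proof of the random-Sidon implication from \cite{Pi3}; as written there is a genuine gap.
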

\begin{proof} We assume for simplicity that the sets
are sequences indexed by $\N$.
By homogeneity (changing $C'_1,C'_2$ accordingly)
we may assume that $\|\varphi^1_n\|_2=\|\varphi^2_n\|_2=1$
for all $n$. 
 Let $E_j\subset L_\infty(T,m)$ be the norm closed span
 of $(\varphi^j_n)$ ($j=1,2$).
 Consider the linear mapping 
 $T_j: E_j \to L_\infty(\T^{\N})$ such that  $T_j(\varphi^j_n )= z_n$.
 By assumption $\|T_j\|\le C_j$. By the injectivity of $L_\infty$-spaces
$T_j$ has   an extension $\tilde T_j: L_\infty(T,m) \to L_\infty(\T^{\N})$ 
 such that $\tilde {T_j}_{| E_j} =T_j$ and  $\| \tilde T_j  \|=\|T_j\|\le C_j$.
 We introduce the operator
 $ {\cl T} : L_\infty(T,m) \to L_\infty(\T^{\N}\times \T^{\N})$ defined by
 $${\cl T} (  f)(z,z')= \tilde {T_1}(f) (z)   + \tilde {T_2}(f) (z').  $$
 Then $\| {\cl T} \|\le C_1+C_2$. 
 The operator $ {\cl T}  \otimes id_{L_\infty(T,m)} $ clearly extends
 to an  bounded operator
 $$W: L_\infty(T\times T) \to L_\infty(\T^{\N}\times \T^{\N}\times T),$$
 satisfying  $\|W\|\le \| {\cl T} \|\le C_1+C_2$.
 
 We claim that the collection
 $$ \cl U= \{W(\varphi_n^1\otimes \varphi_n^1)\} \cup   \{W(\varphi_n^2\otimes \varphi_n^2)\} $$
 is biorthogonal to
 $$\cl V= \{\ovl{z_n\otimes 1\otimes \varphi_n^1}\} \cup   \{\ovl{1\otimes z_n\otimes \varphi_n^2}\}. $$
 Indeed, 
 note
 $W(\varphi_n^1\otimes \varphi_n^1) \subset L_\infty(\T^{\N}\times \T^{\N}) \otimes \varphi_n^1$ and $W(\varphi_n^2\otimes \varphi_n^2) \subset L_\infty(\T^{\N}\times \T^{\N}) \otimes \varphi_n^2$. Therefore, by our $L_2(m)$-orthogonality assumption
 $$\forall n,m\quad W(\varphi_n^1\otimes \varphi_n^1) \perp {1\otimes z_m\otimes \varphi_m^2}\text{  and  } W(\varphi_n^2\otimes \varphi_n^2) \perp { z_m\otimes 1\otimes \varphi_m^2}.$$
  Moreover,  if we set $\xi_n^1= \tilde {T_2}(\varphi_n^1) $ we have
  $$W(\varphi_n^1\otimes \varphi_n^1) = {\cl T}(\varphi_n^1)\otimes \varphi_n^1
  = z_n \otimes 1 \otimes \varphi_n^1 +  1\otimes \xi_n^1 \otimes \varphi_n^1,$$
  which shows that $(W(\varphi_n^1\otimes \varphi_n^1))$
  is biorthogonal to $ \{\ovl{z_n\otimes 1\otimes \varphi_n^1}\}$.
 Similarly $(W(\varphi_n^2\otimes \varphi_n^2))$
  is biorthogonal to $ \{\ovl{ 1\otimes z_n\otimes\varphi_n^2}\}$.
  This proves the claim.\\
 By Remarks \ref{n10} and \ref{n30}, the family  
  $ \cl V
   =\{\ovl{z_n\otimes 1\otimes \varphi_n^1}\} \cup   \{\ovl{1\otimes z_n\otimes \varphi_n^2}\} 
  $ is dominated in $L_1(\T^{\N}\times \T^{\N}\times T)$ by the sequence $(z_n)$.
    By  Lemma \ref{ke1}
we conclude that $\cl U$ is $\otimes^2$-Sidon in $L_\infty(\T^{\N}\times \T^{\N}\times T) $.
Since $W$ is bounded this implies
that $\{\varphi_n^1\otimes \varphi_n^1\} \cup   \{\varphi_n^2\otimes \varphi_n^2\} $
is also $\otimes^2$-Sidon in $L_\infty(T\times T) $.
Consequently  $ \Lambda_1  \cup
    \Lambda_2$
    is $\otimes^4$-Sidon in $L_\infty(T,m)$.
    The assertion about the constant $C$ is easy to check by going over the various steps.
\end{proof}

 \n\textit{Acknowledgement.} Thanks  to  Bernard Maurey for useful communications,
 and to the referee for his/her careful reading.

 \end{document}